\documentclass[11pt,reqno]{amsproc}

\title[The van Dommelen and Shen singularity for Prandtl]{The van Dommelen and Shen singularity in the Prandtl equations}

\author[I.~Kukavica]{Igor Kukavica}
\address{Department of Mathematics, University of Southern California, Los Angeles, CA 90089}
\email{kukavica@usc.edu}

\author[V.~Vicol]{Vlad Vicol}
\address{Department of Mathematics, Princeton University, Princeton, NJ 08544}
\email{vvicol@math.princeton.edu}

\author[F.~Wang]{Fei Wang}
\address{Department of Mathematics, University of Southern California, Los Angeles, CA 90089}
\email{wang828@usc.edu}

\usepackage{fancyhdr}
\usepackage{comment}

\usepackage[margin=1.25in]{geometry}
\usepackage{amsmath, amsthm, amssymb}
\usepackage{times}
\usepackage{graphicx}

\usepackage[usenames,dvipsnames,svgnames,table]{xcolor}
\usepackage[colorlinks=true, pdfstartview=FitV, linkcolor=blue, citecolor=blue, urlcolor=blue]{hyperref}

%%%Citation keys in blue, small on the side
%\providecommand*\showkeyslabelformat[1]{{\normalfont \tiny#1}}
%\usepackage[notref,notcite,color]{showkeys}
%\definecolor{labelkey}{rgb}{0,0,1}

\begin{document}

\def\intint{\int\!\!\!\!\int}
\def\OO{\mathcal O}
\def\SS{\mathbb S}
\def\RR{\mathbb R}
\def\TT{\mathbb T}
\def\ZZ{\mathbb Z}
\def\HH{\mathbb H}
\def\RSZ{\mathcal R}
\def\GG{\mathcal G}
\def\eps{\varepsilon}
\def\tt{\langle t\rangle}
\def\erf{\mathrm{Erf}}
\def\red#1{\textcolor{red}{#1}}
\def\mgt#1{\textcolor{magenta}{#1}}
\def\ff{\rho}
\def\gg{G}
\def\phiyy{\partial_{yy} \phi}
\def\tilde{\widetilde}

\newtheorem{theorem}{Theorem}[section]
\newtheorem{corollary}[theorem]{Corollary}
\newtheorem{proposition}[theorem]{Proposition}
\newtheorem{lemma}[theorem]{Lemma}

\theoremstyle{definition}
\newtheorem{definition}{Definition}[section]
\newtheorem{remark}[theorem]{Remark}

\def\theequation{\thesection.\arabic{equation}}
\numberwithin{equation}{section}

%Igor's macros 
\def\dist{\mathop{\rm dist}\nolimits}    %distance
\def\sgn{\mathop{\rm sgn\,}\nolimits}    %sgn
\def\Tr{\mathop{\rm Tr}\nolimits}    %trace
\def\div{\mathop{\rm div}\nolimits}    %divergence
\def\supp{\mathop{\rm supp}\nolimits}    %divergence
 \def\indeq{\qquad{}\!\!\!\!}                     %indentation in formulas
\def\period{.}                           %period in a formula
\def\semicolon{\,;}                      %semicolon in a formula

\def\nts#1{{\cor #1\cob}}
\def\cor{\color{red}}
\def\cog{\color{green}}
\def\cob{\color{black}}
\def\coe{\color{blue}}
%%%%%%%

%**end of header

\begin{abstract}
In 1980, van Dommelen and Shen provided a numerical simulation that predicts the spontaneous generation of a singularity in the Prandtl boundary layer equations from a smooth initial datum, for a nontrivial Euler background. In this paper we provide a proof of this numerical conjecture by rigorously establishing the finite time blowup of the boundary layer thickness.
%\hfill
%\today.
\end{abstract}

%\subjclass[2000]{35Q35, 35Q30, 76D05}
%\keywords{}
\maketitle

\section{Introduction} 
We consider the 2D Prandtl boundary layer equations for the unknown velocity field $(u,v)=(u(t,x,y),v(t,x,y))$:
\begin{align} 
&\partial_t u - \partial_{yy} u + u \partial_x u + v \partial_y u = - \partial_x P^E \label{eq:P:1}\\
&\partial_x u + \partial_y v = 0 \label{eq:P:2}\\
&u|_{y=0} = v|_{y=0} = 0\label{eq:P:3}\\
&u|_{y \to \infty} = U^E.\label{eq:P:4}
\end{align}
The domain we consider is $\TT \times \RR_+ = \{ (x,y) \in \TT \times \RR \colon y \geq 0\}$, with corresponding periodic boundary conditions in $x$ for all functions.
The function $U^E = U^E(t,x)$ is the trace at $y=0$ of the tangential component of the underlying Euler velocity field $(u^E,v^E)=(u^E(t,x,y),v^E(t,x,y))$, and  $P^E=P^E(t,x)$ is the trace at $y=0$ of the Euler pressure $p^E=p^E(t,x,y)$. They obey the Bernoulli equation
\begin{align} 
\partial_t U^E + U^E \partial_x U^E = -  \partial_x P^E
\label{eq:Bernoulli}
\end{align}
for $x \in \TT$ and $t \geq 0$, with periodic boundary conditions.

The goal of this paper is to prove the formation of finite time singularities in the Prandtl boundary layer equations when the underlying Euler flow is not trivial, i.e., when $U^E \neq 0$. For this purpose,  we consider the Euler trace 
\begin{align} 
U^E &= \kappa \sin x \label{eq:U:E}\\
-\partial_x P^E &= \frac{\kappa^2}{2} \sin(2x)\label{eq:P:E}
\end{align}
proposed by van Dommelen and Shen in~\cite{VanDommelenShen80},
where $\kappa \neq 0$ is a fixed parameter.
These are stationary solutions of the Bernoulli equation \eqref{eq:Bernoulli}. Moreover, the functions $U^E$ and $P^E$ above arise as traces at $y=0$ of the stationary 2D Euler solution
\begin{align*}
u^E(x,y) &= \kappa \sin x \cos y\\
v^E(x,y) &= - \kappa \cos x \sin y\\
p^E(x,y) &=  - \frac{\kappa^2}{4} \left( \cos 2x + \cos 2y \right).
\end{align*}
It is clear that the function $(u^E,v^E)$ described above is divergence free, obeys the boundary condition $v(x,0)=0$, and yields a stationary solution of the Euler equations in $\TT \times \RR_+$. 

\begin{remark}[\bf Numerical blowup is observed]
The Lagrangian computation of van Dommelen and Shen~\cite{VanDommelenShen80} was revisited and improved by many groups in the past decades~\cite{Cowley83, CasselSmithWalker96, HongHunter03, GarganoSammartinoSciacca09,GarganoSammartinoSciaccaCassel14}. 
\begin{figure}[htb!]
\begin{center}
\includegraphics[height=4in]{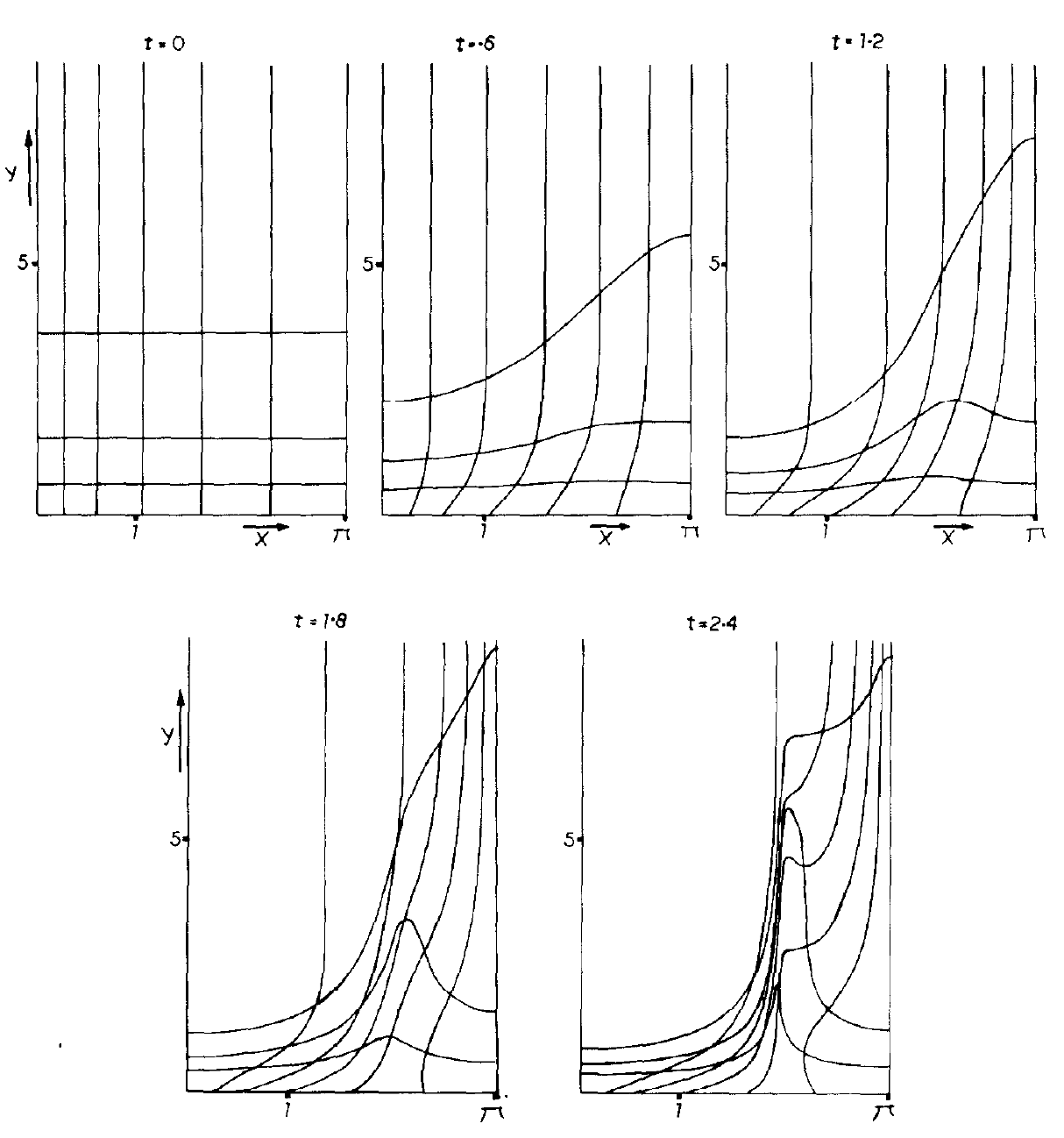}
\caption{The distortion of a typical Lagrangian grid with time. The figure is from  in~\cite[p.~127]{VanDommelenShen80}.}
\end{center}
\end{figure}
The consensus is that all the numerical experiments indicate a singularity formation in finite time from a smooth initial datum. We refer to the recent paper~\cite[Section 4.2]{CaflischGarganoSammartinoSciacca15} for a detailed discussion of the numerical singularity formation in the Prandtl system.
\end{remark}

The following is the main result of this paper:

\begin{theorem}[\bf Finite time blowup for Prandtl]
\label{thm:main}
Consider the Cauchy problem for the Prandtl equations \eqref{eq:P:1}--\eqref{eq:P:4}, with boundary conditions at $y=\infty$ matching \eqref{eq:U:E}--\eqref{eq:P:E}, with $\kappa \neq 0$. There exists a large class of initial conditions $(u_0,v_0)$ which are real-analytic in $x$ and $y$, such that the unique real-analytic solution $(u,v)$ to \eqref{eq:P:1}--\eqref{eq:P:4} blows up in finite time.
\end{theorem}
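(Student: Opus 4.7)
My plan is to reduce the 2D Prandtl system to a scalar one-dimensional parabolic equation on a symmetry axis of the van Dommelen--Shen Euler background, and then exhibit a Riccati-type blow-up along its characteristics. Both $U^E = \kappa \sin x$ and $-\partial_x P^E = (\kappa^2/2)\sin(2x)$ are odd under the reflection $x \mapsto 2\pi - x$, so the class of real-analytic data with $u_0(\cdot, y)$ odd and $v_0(\cdot, y)$ even about $x = \pi$ is invariant under the Prandtl flow; local-in-time analytic well-posedness in this class is standard. In this class $u(t, \pi, y) \equiv 0$, and differentiating \eqref{eq:P:1} in $x$ at $x = \pi$ yields the self-contained scalar equation for $a(t, y) := \partial_x u(t, \pi, y)$,
$$\partial_t a - \partial_{yy} a + a^2 + V \partial_y a = \kappa^2, \qquad V(t, y) = -\int_0^y a(t, z)\,dz,$$
with boundary conditions $a(t, 0) = 0$ and $a(t, y) \to -\kappa$ as $y \to \infty$.

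\textbf{Step 2 (Riccati mechanism along characteristics).} Introduce the axial characteristic $\dot Y(t, \beta) = V(t, Y(t, \beta))$, $Y(0, \beta) = \beta$, and set $\tilde a(t, \beta) := a(t, Y(t, \beta))$. The transport term then drops out and one finds
$$\partial_t \tilde a + \tilde a^2 = \kappa^2 + \partial_{yy} a\big|_{y = Y(t, \beta)}.$$
Stripped of the diffusive remainder this is the Riccati ODE $\dot \phi + \phi^2 = \kappa^2$; its equilibrium $\phi = -\kappa$ is unstable in the direction $\phi < -\kappa$, and every trajectory starting strictly below $-\kappa$ escapes to $-\infty$ in finite time. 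Since the boundary values $0$ and $-\kappa$ permit $a$ to dip far below $-\kappa$ at intermediate heights, I will select an analytic initial profile $a_0$ forming a wide, deep well at some $y_*$, with depth $M := -\min_y a_0 \gg \kappa$ and half-width $L \gg 1/\sqrt{M}$.

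\textbf{Step 3 (Diffusion control; main obstacle).} The key difficulty is that $\partial_{yy} a > 0$ at the bottom of such a well and a priori pushes $\tilde a$ upward, potentially defeating the Riccati collapse. On the candidate blow-up time $T_* \sim 1/M$, however, the diffusive length scale $\sqrt{T_*} \sim 1/\sqrt{M}$ is much smaller than $L$, so the well has no time to broaden appreciably before the nonlinear collapse takes over. I would make this rigorous via a barrier argument: exhibit an explicit time-evolving profile governed by the Riccati ODE at its centre, with uniformly small curvature along the chosen Lagrangian trajectory $Y(\cdot, \beta_*)$, and propagate a comparison estimate $a(t, Y(t, \beta_*)) \leq (\text{Riccati barrier})$ up to $T_*$ via the parabolic maximum principle. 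This forces $\tilde a(T_*, \beta_*) = -\infty$, whence $\partial_x u(t, \pi, \cdot) \to -\infty$ as $t \to T_*^-$, yielding the finite-time singularity claimed in Theorem~\ref{thm:main}. The quantitative balancing of Riccati concentration against parabolic dissipation---carried out within the analytic-regularity framework and propagated off the symmetry axis to the full 2D solution---is the technical heart of the argument.
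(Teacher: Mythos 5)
Your Step 1 is correct, and it is in fact the same reduction the paper performs (the paper works at the symmetry axis $x=0$ with $b(t,y)=-\partial_x u(t,0,y)$, which is your equation at $x=\pi$ after the shift $x\mapsto x+\pi$ and a sign change). The genuine gap is Step 3, and it is not a technical afterthought but the heart of the theorem. First, the equation for $a$ is nonlocal through $V=-\partial_y^{-1}a$, so there is no off-the-shelf comparison principle that lets you "propagate a Riccati barrier" for it; you never exhibit the barrier, and constructing one is exactly the open step. Second, the natural pointwise argument runs the wrong way: at the spatial minimum $m(t)=\min_y a(t,y)$ one has $\partial_y a=0$ and $\partial_{yy}a\geq 0$, hence $\dot m\geq \kappa^2-m^2$, i.e.\ diffusion opposes the downward collapse, so nothing forces $m\to-\infty$. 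Third, the "wide well" heuristic ($L\gg 1/\sqrt{M}$, $T_*\sim 1/M$) does not deliver "uniformly small curvature along $Y(t,\beta_*)$": even for the pure Riccati dynamics (drop diffusion and transport), the curvature $q(t)=\partial_{yy}a$ at the moving minimum obeys $\dot q=-2aq-2(\partial_y a)^2=-2aq$, so $q$ grows like $\exp(2\int|a|)\sim a^2$ as the collapse proceeds. In other words, the term you propose to neglect becomes of the same order as the Riccati nonlinearity $a^2$ near the putative blowup time, no matter how wide the initial well is; the profile self-focuses because deeper points have earlier Riccati blowup times. This is precisely why the inviscid (Hong--Hunter type) characteristic argument does not transfer to the viscous Prandtl reduction, and why \cite{EEngquist97} and the present paper abandon pointwise/Lagrangian arguments.

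The paper's route around this obstruction is integral rather than pointwise: it shifts $b$ by an explicit heat lift $\phi$ (solving \eqref{eq:phi:1}--\eqref{eq:phi:3}) so that $a=b+\phi$ obeys a minimum principle ($a\geq 0$, using $F\geq \tfrac12\phi^2>0$ and the sign of $L[a]$ at a minimum), and then tracks the weighted average $\GG(t)=\int_0^\infty a\,w\,dy$ for a weight whose convex tail satisfies $(g')^2/(gg'')\leq\beta<1$. After integration by parts the diffusion contributes only the harmless $\int a\,w''$, and the nonlocal term $\int a\,\partial_y^{-1}a\,w'$ is absorbed into $(1-\beta)\int a^2 w$ via Cauchy--Schwarz, yielding $\GG'\geq \GG^2/C-C(1+t)\GG$ and finite-time blowup for $\GG_0$ large; blowup of this trace functional already contradicts global smoothness, so no propagation "off the axis" is needed. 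As written, your proposal would need a genuinely new idea to defeat the $q\sim a^2$ curvature growth before it constitutes a proof.
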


\begin{remark}[\bf Inviscid limit]
For a real-analytic initial datum in the Navier-Stokes, Euler, and Prandtl equations, it was shown in~\cite{SammartinoCaflisch98b}  that the inviscid limit of the Navier-Stokes equations is described to the leading order by the Euler solution outside of a boundary layer of thickness $\sqrt{\nu t}$, and by the Prandtl solution inside the boundary layer (see also~\cite{Maekawa14} for initial vorticity supported away from the boundary). Indeed, if any series expansion (in $\nu$) of the Navier-Stokes solution holds, then the leading order term near the boundary must be given by the Prandtl solution. The result in~\cite{SammartinoCaflisch98b} states that the inviscid limit holds on a time interval on which the Prandtl solution does not lose real-analyticity. Our result in Theorem~\ref{thm:main} shows that this time interval cannot be extended to be arbitrarily large, and thus the Prandtl expansion approach to the inviscid limit should  only be expected to hold on finite time intervals.
\end{remark}

\begin{remark}[\bf The case $\kappa=0$]
In the case of a trivial Euler flow $U^E$ and a trivial Euler pressure $P^E$, i.e., for $\kappa=0$ in \eqref{eq:U:E}--\eqref{eq:P:E}, the emergence of a finite time singularity for the Prandtl equations was established in~\cite{EEngquist97}. There, the initial datum is taken to have compact support in $y$ and be large in a certain nonlinear sense. It is shown that either the solution ceases to be smooth, or that the solution along with its derivatives does not decay sufficiently fast as $y \to \infty$. The proof given in~\cite{EEngquist97} does not appear to handle the case $\kappa\neq 0$ treated in this paper. Indeed, here the initial datum does not need to have compact support in $y$ and the pressure gradient is not trivial. Moreover, it is shown in~\cite[Appendix]{GarganoSammartinoSciacca09} that from the numerical point of view the structure of the singularity for $\kappa =0$ is different from the complex structure of the singularities in~\cite{VanDommelenShen80}.
\end{remark}

\begin{remark}[\bf Boundary layer separation and the displacement thickness]
The displacement thickness (cf.~\cite{Schlichting60,VanDommelenShen80,CousteixMauss07}) is defined as
\begin{align}
 \delta^*(t,x) =  \int_0^\infty \left( 1- \frac{u(t,x,y)}{U^E(t,x)} \right) dy = \int_0^\infty \left( 1- \frac{u(t,x,y)}{\kappa \sin(x)} \right) dy.
 \label{eq:displacement:def}
\end{align}
Physically, it measures the effect of the boundary layer on the inviscid flow~\cite{CousteixMauss07}.
\begin{figure}[htb!]
\begin{center}
\includegraphics[height=3in]{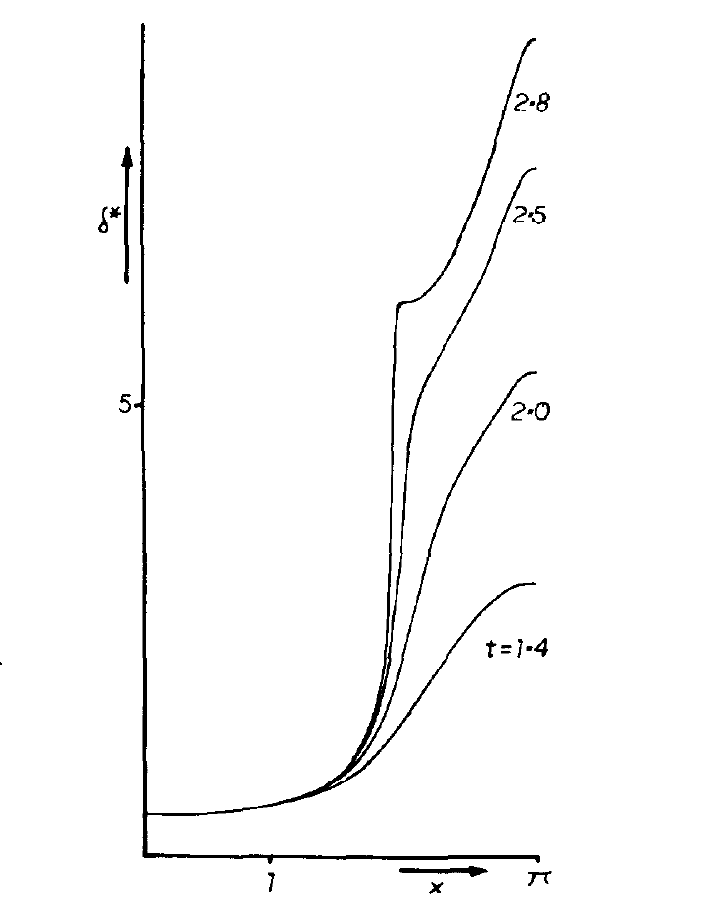}
\caption{The variation of the displacement thickness $\delta^*(t,x)$ for various time instants. The figure is from~\cite[p.~138]{VanDommelenShen80}. }
\end{center}
\end{figure}
As long as $\delta^*(t,x)$ remains bounded, the Prandtl layer remains of thickness proportional to $\sqrt{\nu}$, i.e., it remains a Prandtl layer. In turn, if the displacement thickness develops a singularity in finite time, this signals that a boundary layer separation has occurred, and after this point, the Prandtl expansion is not expected to hold anymore~(see also\cite{Grenier00,GrenierGuoNguyen14b,GrenierGuoNguyen14,GrenierGuoNguyen14c}).
For a proof of a boundary layer separation for the stationary Prandtl equations, we refer to the recent paper~\cite{DalibardMasmoudi15}. 

The proof of Theorem~\ref{thm:main} consists of showing that a Lyapunov functional $\GG(t)$ blows up in finite time. The functional $\GG$ is defined by
\begin{align*}
\GG(t) = \int_0^\infty \left(\kappa \phi(t,y) -\partial_x u(t,0,y) \right)  w(y) dy
\end{align*}
where $w(y)$ is an $L^1$ weight function and $\phi(t,y)$ is the solution of a nonhomogenous heat equation (see \eqref{eq:phi:particular} and \eqref{eq:G:def} below for details). 
We note that the Lyapunov functional was built to emulate a weighted in $y$ version of $\kappa \delta^*(t,x)|_{x=0}$. Indeed, as long as $u$ is smooth near $x=0$, we have that 
\begin{align*}
\kappa \delta^*(t,0) =  \lim_{x\to 0} \kappa \delta^*(t,x) = \int_0^\infty \left( \kappa - \partial_x u(t,0,y) \right) dy.
\end{align*}
\end{remark}

\begin{remark}[\bf More general Euler flows]
The blowup result of Theorem~\ref{thm:main} holds if the Euler flow defined \eqref{eq:U:E}--\eqref{eq:P:E} is replaced by any smooth and odd function $U^E(x)$, upon defining $P^E$ to equal $- (U^E(x))^2/2$, which is in turn even in $x$. 
\end{remark}

\begin{remark}[\bf More general classes of initial conditions]
\label{rem:datum}
We note that besides yielding the local existence of solutions (cf.~\cite{SammartinoCaflisch98a}), the analyticity of the initial datum is {\em not} required for proving Theorem~\ref{thm:main}. One may instead consider an initial datum that is merely Sobolev smooth with respect to $y$, analytic with respect to $x$, and decays sufficiently fast as $y\to \infty$ (cf.~\cite{CannoneLombardoSammartino01,KukavicaVicol13a,IgnatovaVicol15}). Alternatively Gevrey-class $7/4$ regularity in $x$ may be considered~\cite{GerardVaretMasmoudi13}, whose vorticity decays sufficiently fast with respect to $y$. On the other hand, in view of the oddness in $x$ of the boundary condition~\eqref{eq:U:E}, we cannot consider initial datum which is in the Oleinik class of monotone in $y$ flows (uniformly with respect to $x$), although the local existence holds in this class~\cite{Oleinik66,MasmoudiWong12a,AlexandreWangXuYang15}. The datum in~\cite{XuZhang15} is also not allowed in view of the boundary conditions~\eqref{eq:U:E}. The mixed analyticity near $x=0$, and monotonicity away from the $y$-axis (of different signs) may however be treated, using the local existence result in~\cite{KukavicaMasmoudiVicolWong14}.
\end{remark}

\begin{remark}[\bf Ill-posedness for the Prandtl equations]
The ill-posedness of the Prandtl equations was established at the linear level in~\cite{GerardVaretDormy10}, and at the nonlinear level in~\cite{Grenier00,GuoNguyen11,GerardVaretNguyen12}. We note that these results do not imply the finite time blowup from a given initial datum.
\end{remark}

\begin{remark}[\bf Finite time blowup for the hyrdostatic Euler equations]
Here we note certain very interesting blowup results~\cite{CaoIbrahimNakanishiTiti13,Wong15} for the hydrostatic Euler equations (which has many analogies with the Prandtl equations).
These equations are set in a finite strip $\Omega=R\times[0,h]$, and have different boundary conditions (only $v=0$ is imposed at the top and bottom boundaries). For these equations the local existence was established for convex~\cite{Brenier99,MasmoudiWong12b} or analytic~\cite{KukavicaTemamVicolZiane11} data, as well as for the combination thereof~\cite{KukavicaMasmoudiVicolWong14}. These equations are at the same time severely unstable (i.e., ill-posed in Sobolev spaces) if convexity or analyticity is absent \cite{Renardy09}. In~\cite{CaoIbrahimNakanishiTiti13} and \cite{Wong15} the finite blowup of odd solutions is established, by observing the behavior of $u_x=-w_z$. The main difference with~\cite{EEngquist97} is the presence of the pressure, which is quadratic in $u$ (cf.~\cite{CaoIbrahimNakanishiTiti13} for further details).
\end{remark}

The paper is organized as follows. In Section~\ref{sec:proof} we give the proof of Theorem~\ref{thm:main} assuming certain properties of a boundary condition lift $\phi=\phi(t,y)$ and of a weight function $w=w(y)$. Sections~\ref{sec:phi} and~\ref{sec:w} are devoted to establishing these properties of $\phi$ and $w$ respectively.

\section{Proof of Theorem~\ref{thm:main}}
\label{sec:proof}
\subsection{Local existence}
As initial datum for the Prandtl equation we consider 
\begin{align}
 u_0(x,y) = \kappa \erf\left(\frac{y}{2}\right) \sin x + \bar u_0(x,y)
 \label{eq:IC:1}
\end{align}
where
$\bar u_0(x,y)$ is a real-analytic function of $x$ and $y$ which is also {\em odd} with respect to $x$, and $\erf(z) = 2 \pi^{-1/2} \int_0^z \exp(-\bar z^2) d\bar z$ is the Gauss error function. We assume that $\bar u_0$ decays sufficiently fast (at least at an integrable rates) as $y\to \infty$, and takes the value $0$ at $y=0$. Moreover, we assume that
\begin{align*}
 a_0(y) = (-\partial_x \bar u_0)(0,y)
\end{align*}
obeys
\begin{align*}
 a_0(y) >0 \quad \mbox{for all} \quad y>0
\end{align*}
and that 
\begin{align}
 \GG_0 = \int_0^\infty a_0(y) w(y) dy \geq \bar C_{\kappa,w}
 \label{eq:G:big}
\end{align}
where the weight $w$ is as constructed in Section~\ref{sec:w}, and $\bar C_{\kappa,w} \geq 1$ is a constant that depends on $\kappa$ and on the $L^1(\RR_+)$ weight function $w$. 

For instance, we may consider 
\begin{align}
\bar u_0(x,y) = - Ay^2 \exp(-y^2) \sin x
\label{eq:IC:2}
\end{align}
where $A = A(\kappa,w)>0$ is a sufficiently large constant. This choice for $\bar u_0(x,y)$ yields that  $a_0(y)$ is a large constant multiple of $\varphi(y) = y^2 \exp(-y^2)$. The local in time existence of a unique real-analytic solution of the Prandtl system \eqref{eq:P:1}--\eqref{eq:P:E} with initial datum given by \eqref{eq:IC:1}--\eqref{eq:IC:2} follows from \cite{SammartinoCaflisch98a}.  

We note though that the real-analyticity is not needed in the blowup proof. It is only used to ensure that we have the local in time existence and uniqueness of smooth solutions. Much more general classes of initial conditions $\bar u_0$ may be considered as long as they are odd with respect to $x$, the Cauchy problem is locally well-posed (cf.~Remark~\ref{rem:datum} above), and \eqref{eq:G:big} holds.

\subsection{Restriction of Prandtl dynamics on the $y$-axis}
Consider an initial datum $u_0(x,y)$ for the Prandtl equations that is odd in $x$ (such as the one defined in \eqref{eq:IC:1}). Note that the boundary condition at $y=0$ is homogenous and thus automatically odd in $x$,  the boundary condition at $y=\infty$ given by the Euler trace in \eqref{eq:U:E} is also odd in $x$, and the derivative of the Euler pressure trace~\eqref{eq:P:E} is odd in $x$ as well. Therefore, the unique classical solution $u(t,x,y)$ of \eqref{eq:P:1}--\eqref{eq:P:E} is  also odd in $x$. Hence, as long as the solution remains smooth we have
\begin{align}
u(t,0,y) = (\partial_y u)(t,0,y) = (\partial_x^2 u)(t,0,y) =0.
\label{eq:oddness}
\end{align}
Physically, this symmetry freezes the Lagrangian paths emanating from the $y$-axis, introducing a stable stagnation point in the flow. 
As in~\cite{EEngquist97} (see also~\cite{CaoIbrahimNakanishiTiti13,Wong15}) this allows one to consider the dynamics obeyed by the tangential derivative of $u$ at $x=0$, i.e.,
\begin{align*} 
b(t,y) = -(\partial_x u)(t,x,y)|_{x=0}.
\end{align*}
As long as the solution remains smooth (so that we may take traces at $x=0$), using \eqref{eq:oddness} one derives that the equation obeyed by $b(t,y)$ is
\begin{align} 
&\partial_t b - \partial_{yy} b - b^2 + \partial_y^{-1} b \, \partial_y b= - \kappa^2 \label{eq:b:1}\\
&b|_{y=0} =0  \label{eq:b:2} \\
&b|_{y \to \infty} = - \kappa.  \label{eq:b:3}
\end{align}
In \eqref{eq:b:1} and throughout the paper, we denote the integration with respect to the vertical variable as
\begin{align*}
 \partial_y^{-1} \varphi(t,y) = \int_0^y \varphi(t,y') dy'
\end{align*}
for any function $\varphi(t,y)$ which is integrable in $y$.
In order to obtain \eqref{eq:b:1}, one applies $-\partial_x$ to \eqref{eq:P:1} and then evaluates the resulting equation on the $y$-axis. Similarly, \eqref{eq:b:3} follows upon taking a derivative with respect to $x$ of \eqref{eq:U:E} and setting $x=0$.

\subsection{A shift of the boundary conditions}
In order to homogenize the boundary condition at $y=\infty$ when $t=0$, we add to $b$ a   lift $\phi(t,y)$ defined as the solution of the nonhomogenous heat equation
\begin{align} 
&\partial_t \phi - \partial_{yy} \phi = \kappa^2 \label{eq:phi:1}\\
&\phi|_{y=0} =0 \label{eq:phi:2} \\
&\phi|_{y \to \infty} =  \kappa + \kappa^2 t \label{eq:phi:3}
\end{align}
with an initial datum that we may choose, as long as it obeys compatible boundary conditions. We consider 
\begin{align*}
\phi_0(y) =   \kappa \erf\left( \frac y 2\right),
\end{align*}
so that the solution of \eqref{eq:phi:1}--\eqref{eq:phi:3} is explicit
\begin{align}
 \phi(t,y) &=   \kappa \erf\left( \frac{y}{\sqrt{4(t+1)}} \right) \notag\\
&\qquad + \kappa^2 t  \left( \frac{y^2}{2t} \left( \erf\left( \frac{y}{\sqrt{4 t}} \right) - 1 \right) + \left( \erf\left( \frac{y}{\sqrt{4 t}} \right) + \frac{y}{\sqrt{\pi t}} \exp\left( - \frac{y^2}{4 t}\right) \right) \right).
\label{eq:phi:particular}
\end{align}
In Section~\ref{sec:phi} we prove a number of properties (such as $\phi \geq 0$ and that $\partial_y \phi \geq 0$) of the function $\phi$ defined in \eqref{eq:phi:particular}.

Letting 
\begin{align} 
a(t,y) = b(t,y) + \phi(t,y)
\label{eq:a:def}
\end{align} 
the system \eqref{eq:b:1}--\eqref{eq:b:3} becomes
\begin{align} 
&\partial_t a - \partial_{yy} a = (a - \phi)^2 - \partial_y^{-1} (a-\phi) \, \partial_y (a - \phi) \label{eq:a:1} \\
&a|_{y=0} =0 \label{eq:a:2} \\
&a|_{y \to \infty} = \kappa^2 t. \label{eq:a:3}
\end{align}
The equation \eqref{eq:a:1} is similar to the one obtained in~\cite{EEngquist97} for $\kappa = 0$, except for two additional terms on the right side: a forcing term
\begin{align} 
F(t,y) = \phi(t,y)^2 - \partial_y^{-1} \phi(t,y) \, \partial_y \phi(t,y) 
\label{eq:F:def}
\end{align}
and a linear term
\begin{align} 
L[a](t,y) = - 2 a(t,y) \phi(t,y) + \partial_y^{-1} \phi(t,y) \, \partial_y a(t,y) + \partial_y^{-1} a(t,y) \, \partial_y \phi(t,y) .
\label{eq:L:def}
\end{align}
The forcing term is explicit in view of \eqref{eq:phi:particular}, while the linear operator $L[a]$  has nice coefficients given in terms of $\phi$.
With the notation \eqref{eq:F:def}--\eqref{eq:L:def},  the evolution equation for $a$ becomes
\begin{align} 
&\partial_t a - \partial_{yy} a = a^2 -  \partial_y^{-1} a \, \partial_y a  + L [a] + F 
\label{eq:a:evo}\\
&a|_{y=0} =0 
\label{eq:a:BC:1}\\
&a|_{y \to \infty} = \kappa^2 t.
\label{eq:a:BC:2}
\end{align}
In order to prove Theorem~\ref{thm:main}, we show that the solution of \eqref{eq:a:evo}--\eqref{eq:a:BC:2} blows up in finite time from a very large class of smooth initial data $a_0$.

\subsection{Minimum principle}
The main purpose of shifting the function $b$ up by $\phi$ is so that the resulting function $a$ obeys a positivity principle.
\begin{lemma}
\label{lem:min}
Assume that $a_0 = a(0,y)$ is such that $a_0(y)>0$ for all $y \in (0,\infty)$. Consider a smooth solution $a(t,y)$ of the initial value problem associated with \eqref{eq:a:evo}--\eqref{eq:a:BC:2} and initial condition $a_0$, on a time interval $[0,T]$. Then we have that $a(t,y) \geq 0$ for all $y\geq 0$ and $t \in [0,T]$.
\end{lemma}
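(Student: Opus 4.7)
The plan is a standard parabolic minimum-principle argument that exploits the sign properties of $\phi$ and $F$ established in Section~\ref{sec:phi} (in particular $\phi \geq 0$, $\partial_y \phi \geq 0$, and $F \geq 0$). Suppose by contradiction that $a$ becomes negative on $[0,T]\times[0,\infty)$. Since $a_0(y)>0$ for $y>0$, $a(t,0)=0$, and $a(t,y)\to\kappa^2 t>0$ as $y\to\infty$ for $t\in(0,T]$, continuity yields a smallest time $t_*\in(0,T]$ and a point $y_*\in[0,\infty)$ with $a(t_*,y_*)=0$ and $a\geq 0$ on $[0,t_*]\times[0,\infty)$. To ensure $y_*$ is a genuine interior minimizer (and not just approached as $y\to 0^+$), I would first work with the perturbation $a_\delta:=a+\delta(1+t)$, whose boundary traces are strictly positive, prove $a_\delta>0$, and take $\delta\downarrow 0$ at the end.

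At the interior minimizer $(t_*,y_*)$ the classical conditions $\partial_t a \leq 0$, $\partial_y a = 0$, $\partial_{yy}a\geq 0$ hold, and since $a\geq 0$ on $[0,y_*]$ at time $t_*$, the nonlocal antiderivative satisfies $\partial_y^{-1}a(t_*,y_*)=\int_0^{y_*}a(t_*,y')\,dy'\geq 0$. Substituting these into the right-hand side of~\eqref{eq:a:evo}, the nonlinear contributions $a^2$ and $\partial_y^{-1}a\,\partial_y a$ both vanish, and
\begin{align*}
L[a](t_*,y_*) \;=\; -2a\phi + \partial_y^{-1}\phi \cdot \partial_y a + (\partial_y^{-1}a)(\partial_y\phi) \;=\; (\partial_y^{-1}a)(\partial_y\phi)\;\geq\; 0,
\end{align*}
using $\partial_y\phi\geq 0$. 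Together with $F(t_*,y_*)\geq 0$, the equation forces $\partial_t a - \partial_{yy} a \geq 0$ at $(t_*,y_*)$, which combined with $\partial_t a \leq 0$ and $\partial_{yy} a \geq 0$ collapses everything to equality.

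The final and most delicate step is promoting this degenerate equality into an actual contradiction. I would do so by the strong parabolic minimum principle applied to the linearization $\partial_t a - \partial_{yy} a + \tilde A\,a + \tilde B\,\partial_y a + \tilde C\,\partial_y^{-1} a \geq 0$ obtained by freezing the coefficients along the known smooth solution; on the set where this inequality holds, $a$ cannot attain a zero interior minimum without being identically zero in a backward parabolic neighborhood, which is incompatible with $a_0>0$. Alternatively, the perturbed version above can be closed by noting that at the touching point for $a_\delta$ one has $\partial_t a \leq -\delta$, while the shifted right-hand side produces a strictly positive coercive term $a^2 = \delta^2(1+t_*)^2$ that should dominate the only bad-sign contribution $(\partial_y^{-1}a)(\partial_y\phi)\geq -\delta(1+t_*)y_*\,\partial_y\phi(t_*,y_*)$ for $\delta$ small. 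I expect the main obstacle to be precisely this control of the nonlocal term $\partial_y^{-1}a\,\partial_y\phi$: the presence of the integral operator prevents a purely local application of the min principle, so one must either exploit the particular decay of $\partial_y\phi$ in $y$ (available from Section~\ref{sec:phi}) or shape the perturbation in $y$ accordingly.
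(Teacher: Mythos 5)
Your setup (first touching time, interior minimizer via a positive perturbation, $\partial_y a=0$, $\partial_{yy}a\geq 0$, $L[a]$ reducing to $\partial_y^{-1}a\,\partial_y\phi\geq 0$) matches the paper's argument, but the way you close it has a genuine gap. The paper never faces your ``degenerate equality'' case, because the forcing is \emph{strictly} positive away from $y=0$: using $\phi\geq 0$ and $\partial_{yy}\phi\leq 0$ one has $\partial_y F=\phi\,\partial_y\phi-\partial_y^{-1}\phi\,\partial_{yy}\phi\geq\phi\,\partial_y\phi$, hence $F(t,y)\geq\tfrac12\phi(t,y)^2>0$ for $y>0$, and this immediately contradicts $\partial_t a(t_0,y_0)\leq 0$ at an interior touching point. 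You only invoke $F\geq 0$ (and do not prove even that; it is derived in the paper from the concavity $\partial_{yy}\phi\leq 0$, which your proposal never uses), so you are left trying to upgrade a non-strict inequality, and neither of your two proposed fixes works as stated. The strong-maximum-principle route for the ``linearization'' is not justified: the operator contains the nonlocal term $\partial_y^{-1}a$, and you yourself note this blocks a purely local argument; you give no substitute.

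The quantitative fallback is also flawed. With $a_\delta=a+\delta(1+t)$, at the touching point you must show the right side of \eqref{eq:a:evo} exceeds $-\delta$; the problematic contribution is $\partial_y^{-1}a\,\partial_y\phi\geq-\delta(1+t_*)\,y_*\,\partial_y\phi(t_*,y_*)$, which is \emph{linear} in $\delta$, so the term $a^2=\delta^2(1+t_*)^2$ cannot dominate it ``for $\delta$ small'' --- and you must send $\delta\downarrow 0$ at the end, so you cannot take $\delta$ large either. The missing ingredient is the pointwise inequality
\begin{align*}
y\,\partial_y\phi(t,y)\leq 2\phi(t,y),\qquad\text{equivalently}\qquad -2c\,\phi+c\,y\,\partial_y\phi=c\int_0^y\bigl(\bar y\,\partial_{yy}\phi-\partial_y\phi\bigr)\,d\bar y\leq 0\ \ (c>0),
\end{align*}
which follows from $\partial_y\phi\geq 0$ and $\partial_{yy}\phi\leq 0$; this is exactly how the paper handles its constant shift $\tilde a=a+\eps$, since it shows the shift's contribution through $L$ has a favorable sign (here it makes $2\delta(1+t_*)\phi-\delta(1+t_*)y_*\partial_y\phi\geq 0$), after which the strict bound $F\geq\tfrac12\phi^2>0$ yields the contradiction. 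So the skeleton of your argument is right, but without the concavity of $\phi$ (giving both $F>0$ and $y\,\partial_y\phi\leq 2\phi$) the proof does not close.
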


Before proving Lemma~\ref{lem:min}, we need to establish certain positivity properties concerning the function $\phi$.
\begin{lemma}
\label{lem:F}
Let $\phi(t,y)$ be as defined in \eqref{eq:phi:particular} with $\kappa > 0$. Then we have that 
\begin{align} 
\phi(t,y) &\geq 0 \label{eq:phi:positive}\\
\phi(t,y) &\leq C_\kappa (1+t) \label{eq:phi:bounded}\\
\partial_y \phi(t,y) &\geq 0 \label{eq:phi:increasing}\\
\partial_{yy} \phi(t,y) &\leq 0 \label{eq:phi:concave}
\end{align}
for all $t\geq 0$ and all $y\geq 0$,
where  $C_\kappa>0$ is a constant that depends only on $\kappa$. Moreover, the inequality in \eqref{eq:phi:positive} is strict for $y>0$.
\end{lemma}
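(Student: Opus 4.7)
The plan is to deduce all four estimates from the defining heat equation \eqref{eq:phi:1}--\eqref{eq:phi:3} by applying the weak maximum/minimum principle on $\{t\geq 0,\ y\geq 0\}$ to $\phi$ itself and to its first two $y$-derivatives, each of which solves a linear parabolic problem with computable boundary and initial data. The explicit representation \eqref{eq:phi:particular} and its $y$-derivatives exhibit Gaussian decay as $y\to\infty$ on any bounded time interval, so the maximum principle applies in the standard way on the half-line.

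For \eqref{eq:phi:positive}, $\phi$ solves a heat equation with nonnegative forcing $\kappa^2$, nonnegative initial datum $\phi_0(y)=\kappa\erf(y/2)$, and boundary values $\phi(t,0)=0$ and $\phi(t,\infty)=\kappa+\kappa^2 t\geq 0$, so the minimum principle yields $\phi\geq 0$; strict positivity for $y>0$ follows either from the explicit formula \eqref{eq:phi:particular} or from the strong minimum principle, which combined with the positive forcing precludes an interior vanishing point. For \eqref{eq:phi:bounded}, compare $\phi$ with the explicit supersolution $\psi(t,y)=\kappa+\kappa^2 t$: $\psi$ satisfies the same inhomogeneous heat equation and dominates $\phi$ on the parabolic boundary, so the maximum principle applied to $\phi-\psi$, which solves the homogeneous heat equation with nonpositive initial/boundary data, gives $\phi\leq\kappa+\kappa^2 t\leq C_\kappa(1+t)$.

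For \eqref{eq:phi:increasing}, let $u=\partial_y\phi$, which satisfies the homogeneous heat equation with initial datum $u_0(y)=(\kappa/\sqrt{\pi})e^{-y^2/4}\geq 0$ and $u(t,\infty)=0$. Since $\phi\geq 0$ vanishes at $y=0$, the Hopf lemma gives $u(t,0)=\partial_y\phi(t,0)\geq 0$ for all $t>0$; the minimum principle then yields $u\geq 0$. For \eqref{eq:phi:concave}, set $v=\partial_{yy}\phi$, which again solves the homogeneous heat equation. The initial datum is $v_0(y)=-(\kappa y/(2\sqrt{\pi}))e^{-y^2/4}\leq 0$, and $v(t,\infty)=0$. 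Crucially, the $y=0$ value of $v$ is recovered from the PDE itself: since $\phi(t,0)\equiv 0$ implies $\partial_t\phi(t,0)=0$, the equation gives $v(t,0)=\partial_{yy}\phi(t,0)=-\kappa^2\leq 0$. The maximum principle then yields $v\leq 0$.

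The main technical subtlety is the treatment of the boundary condition at $y=0$ when comparing $\partial_y\phi$, since this value is not prescribed by \eqref{eq:phi:2}; this is resolved either by invoking the Hopf lemma as above, or by differentiating the explicit formula \eqref{eq:phi:particular} and inspecting signs directly, which is a short computation that also provides an alternative, fully elementary route to all of \eqref{eq:phi:positive}--\eqref{eq:phi:concave}.
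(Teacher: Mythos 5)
Your argument is correct, and while it rests on the same basic mechanism as the paper's proof---parabolic maximum principles applied to $\phi$, $\partial_y\phi$, and $\partial_{yy}\phi$, with the same initial data $\partial_y\phi|_{t=0}=(\kappa/\sqrt\pi)e^{-y^2/4}$ and $\partial_{yy}\phi|_{t=0}=-(\kappa y/(2\sqrt\pi))e^{-y^2/4}$ and the same boundary value $\partial_{yy}\phi|_{y=0}=-\kappa^2$---the route through the four inequalities is genuinely different. The paper first proves \eqref{eq:phi:increasing} by differentiating the explicit formula \eqref{eq:phi:particular} to read off $\partial_y\phi|_{y=0}=\kappa/\sqrt{\pi\tt}+2\kappa^2\sqrt{t/\pi}>0$, implements the maximum principle by an energy argument with the convex function $\ff(x)=x^4\mathbf{1}_{x\le 0}$, and only then obtains \eqref{eq:phi:positive} via the fundamental theorem of calculus; \eqref{eq:phi:bounded} is proved by bounding the self-similar profile $2z^2(\erf z-1)+\erf z+\tfrac{2}{\sqrt\pi}ze^{-z^2}$ directly, and \eqref{eq:phi:concave} by computing $\phiyy$ explicitly. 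You instead get \eqref{eq:phi:positive} first, from the sign of the forcing $\kappa^2$ and of the parabolic boundary data, then recover the missing boundary value $\partial_y\phi(t,0)\ge 0$ from positivity of $\phi$ and $\phi(t,0)=0$ (for this, the elementary observation that $y=0$ is a minimum of $\phi(t,\cdot)$ already suffices; Hopf is not needed), obtain $\partial_{yy}\phi(t,0)=-\kappa^2$ by restricting the PDE to $\{y=0\}$, and prove \eqref{eq:phi:bounded} by comparison with the supersolution $\kappa+\kappa^2t$. Your version requires almost no manipulation of the explicit solution formula and even sharpens \eqref{eq:phi:bounded} to $\phi\le\kappa+\kappa^2t$, whereas the paper's computations make all boundary/initial values explicit and its $\ff$-energy argument gives a self-contained substitute for the half-line maximum principle. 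One small point of precision: the justification of the weak maximum principle on the unbounded strip should be phrased in terms of boundedness (Phragm\'en--Lindel\"of) rather than Gaussian decay, since $\phi$ itself tends to $\kappa+\kappa^2t$ as $y\to\infty$; the functions you actually compare ($\phi-\kappa-\kappa^2t$, $\partial_y\phi$, $\partial_{yy}\phi$) are bounded and decay on bounded time intervals, so the argument goes through as stated.
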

The proof of Lemma~\ref{lem:F} is given in Section~\ref{sec:phi} below.

\begin{proof}[Proof of Lemma~\ref{lem:min}]
We argue by contradiction.
Since the solution is classical on $[0,T]$ and decays sufficiently fast as $y\to \infty$, in order to reach a strictly negative value in $[0,T] \times (0,\infty)$ there must exist a first time $t_0$ and an interior point $y_0 > 0$, such that 
\begin{align*}
a(t_0,y_0)&= 0\\
a(t_0,y) &\geq 0 \quad \mbox{for all} \quad y \in \RR_+\\
(\partial_t a)(t_0,y_0) &\leq 0.
\end{align*}
As is classical for the heat equation the contradiction arises by computing the time derivative of $a$ at the point $(t_0,y_0)$ and showing that it is strictly positive, contradicting  the minimality of $t_0$. In order to bound $(\partial_t a)(t_0,y_0)$ from below we use \eqref{eq:a:evo}. Since $a(t_0,\cdot)$ has a global minimum at the interior point $y_0$, we have  
\begin{align*}
(\partial_y a)(t_0,y_0) &= 0 \\
(\partial_{yy} a)(t_0,y_0) &\geq 0.
\end{align*}
Since by assumption $a(t_0,y_0)=0$, it follows that $(\partial_{yy} a + a^2 - \partial_y^{-1} a \, \partial_y a)(t_0,y_0) \geq 0$.
Moreover, since $\phi$ is a non-negative non-decreasing function, we immediately obtain from \eqref{eq:L:def} that $L[a](t_0,y_0) \geq 0$. 
We conclude the proof by showing that $F(t_0,y_0) >0$. Indeed, by \eqref{eq:phi:positive} and \eqref{eq:phi:concave} we have that 
\begin{align*}
\partial_y F = \phi \partial_y \phi - \partial_y^{-1} \phi \, \partial_{yy} \phi \geq \phi \partial_y \phi = \frac 12 \partial_y (\phi^2)
\end{align*}
and thus
\begin{align}
F(t,y) = \int_0^y \partial_y F(t,\bar y) d\bar y \geq \frac 12 \int_0^y \partial_y (\phi^2)(t,\bar y) d \bar y = \frac 12 (\phi(t,y))^2 > 0
\label{eq:F:positive}
\end{align}
whenever $y>0$, in view of Lemma~\ref{lem:F}.

In order to fully justify this argument, we apply the proof to  $\tilde a(t,y) = a(t,y) + \eps$ and show that $\tilde a(t,y)$ remains non-negative for every $\eps >0$. The latter requires the additional observation that 
\[
L[\eps](t,y) = - 2 \eps \phi(t,y) + \eps y \partial_y \phi(t,y) = \eps \int_0^y \left( \bar y \partial_{yy} \phi (t,\bar y) - \partial_y  \phi(t,\bar y) \right) d\bar y \leq 0
\]
in view of Lemma~\ref{lem:F}. This concludes the proof of the minimum principle.
\end{proof}

\subsection{Blowup of a Lyapunov functional}
Motivated by the displacement thickness (cf.~\eqref{eq:displacement:def}) we consider the evolution of the weighted average of $a(t,y)$ on $\RR_+$. For a suitable weight $w(y)$ to be defined below and a non-negative solution $a$ of \eqref{eq:a:evo}--\eqref{eq:a:BC:2}, we define the Lyapunov functional
\begin{align} 
\GG(t) = \int_0^\infty a(t,y) w(y) dy.
\label{eq:G:def}
\end{align}
Note that since $a \geq 0$ as long as $a$ remains smooth (cf.~Lemma~\ref{lem:min}) we have that
\begin{align*} 
\GG(t) \geq 0
\end{align*}
for all $t \geq 0$.
Our goal is to establish an inequality of the type 
\begin{align}
\frac{d\GG}{dt} 
\geq \frac{1}{C} \GG^2 - C (1+t) (1+\GG)
\label{eq:to:show}
\end{align}
for a constant $C \geq 1$. Choosing a suitable initial datum, we then conclude that $\GG$ blows up in finite time. The first step is to present the properties of the weight $w$ in \eqref{eq:G:def} which are needed in the proof of \eqref{eq:to:show}.

\subsubsection{Properties of the weight function $w$}
We consider a weight function $w(y)$ such that:
\begin{align*} 
& w \in W^{2,\infty}(\RR_+)\\
& w \geq 0 \\
& w|_{y=0} = w|_{y \to \infty} = 0 \\
& w \in L^1(\RR_+).
\end{align*}
The weight is given by glueing two functions $f$ and $g$, i.e.,
\begin{align*} 
w(y) = \begin{cases}
f(y), 0\leq y \leq Q,\\
g(y), y\geq Q,
\end{cases}
\end{align*}
where $Q > 0$; the function $f$ is such that
\begin{align} 
& f(0) = 0 \label{eq:f:0}\\
& f \geq 0, \mbox{ on }  [0, Q]
\label{eq:IK:f>0}\\
& f'' \leq 0, \mbox{ on }  [0,Q] 
\label{eq:IK:f:d2<0}\\
&f''(y) \geq  -c_f f(y), \mbox{ for all } y \in [0, Q], \mbox{ where } c_{f}> 0 
\label{eq:IK:f:d2:lower} \\
&y f'(y) \leq \bar c_f f(y), \mbox{ for all } y \in [0,Q], \mbox{ where } \bar c_f >0
\label{eq:IK:f:d1}
\end{align}
and the function $g$, which we extend to be defined on $[M,\infty)$ for some $M \in (0,Q)$ obeys
\begin{align} 
& \lim_{y \to \infty} g(y) = 0 = \lim_{y\to \infty} g'(y)\label{eq:g:infty}\\
& g(y)> 0, \mbox{ for all } y \geq M\\
&g'(y) <  0, \mbox{ for all } y \geq M
\label{eq:IK:g:decreasing}\\
&g''(y) > 0, \mbox{ for all } y\geq M \label{eq:g:convex}\\
& \frac{g'(y)^2}{g(y) g''(y) } \leq \beta < 1, \mbox{ for all } y \geq Q. \label{eq:IK:KEY:1}
\end{align}
Also let $\psi(z)$ be a smooth non-decreasing cutoff function such that $\psi(z)=0$ for all $z\leq 0$, $\psi(z)=1$ for all $z\geq 1$, and $|\psi'(z)|\leq 2$ for all $y \in (0,1)$. Define the cutoff function
\begin{align} 
\eta(y) = \psi\left( \frac{y-M}{Q-M} \right).
\label{eq:eta:def}
\end{align}
Note that $\eta$ vanishes identically on $[0,M ]$ and equals $1$ on $[Q,\infty)$. Its derivative localizes to $[M,Q]$ and obeys
\begin{align*} 
0 \leq \eta'(y) \leq \frac{2}{Q-M}
\end{align*}
for all $y \in (M,Q)$.
Lastly, we require that the functions $f$ and $g$ obey the compatibility conditions
\begin{align} 
& \eta(y) \frac{g'(y)^2}{f(y) g''(y)} \leq \beta < 1, \mbox{ for all } y \in (M,Q) \label{eq:IK:KEY:2}\\
&2 \eta'(y) |g'(y)| \leq \eta(y) g''(y) - f''(y), \mbox{ for all } y \in (M, Q). \label{eq:IK:KEY:3}
\end{align}
The construction of two functions $f$ and $g$ that obey the properties \eqref{eq:f:0}--\eqref{eq:IK:KEY:3} listed above is provided in Section~\ref{sec:w}. A sketch of the graph of the resulting weight $w(y)$ is given in Figure~\ref{fig:w} below.
Throughout paper, we shall denote derivatives of the functions $w,f,g$ with primes, as they are only functions of the variable $y$.

\begin{figure}[htb!]
\begin{center}
\setlength{\unitlength}{1.5in} 
\begin{picture}(4,2)
\put(0.25,0){\includegraphics[height=3in]{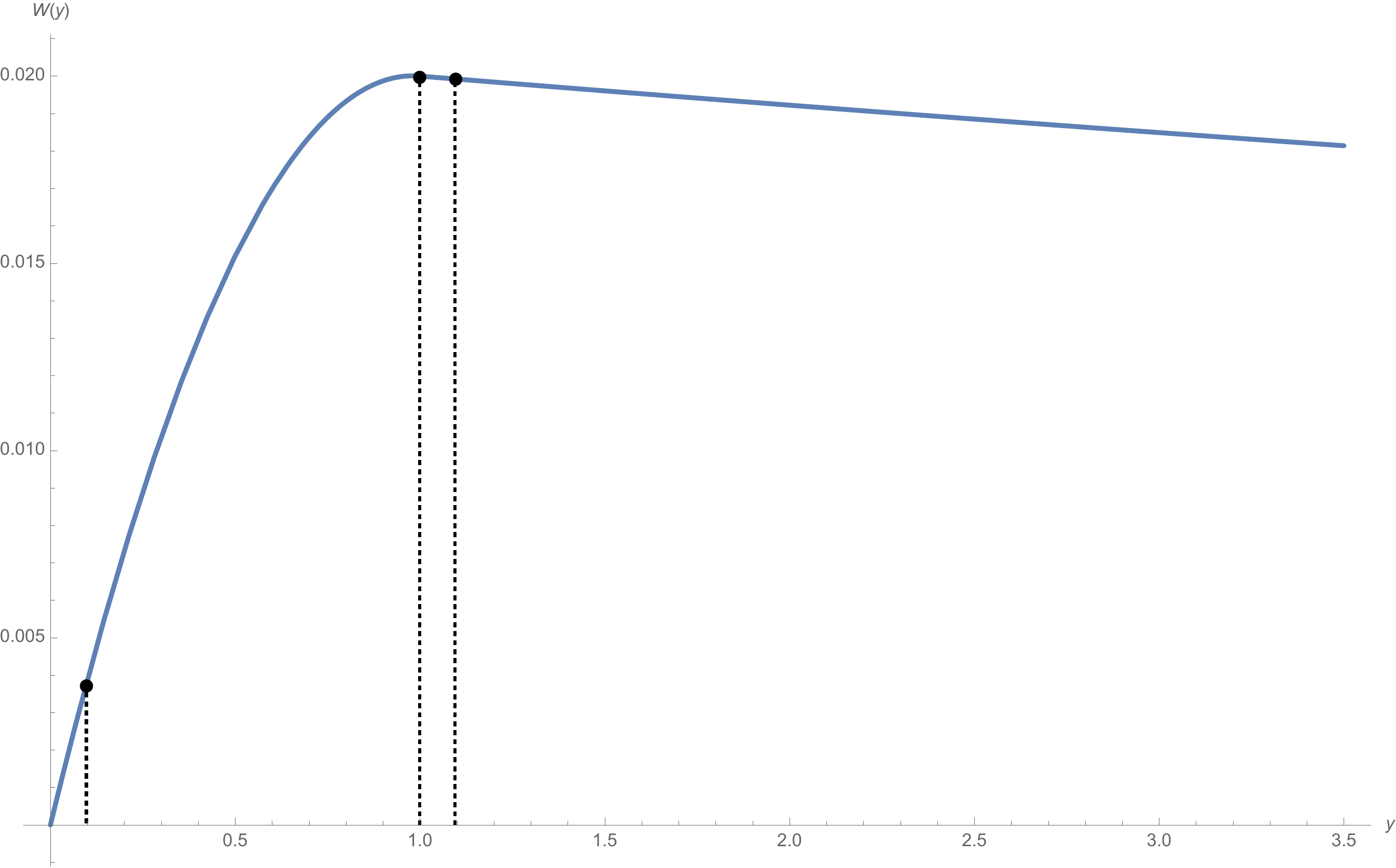}}
\put(0.55,1){\mbox{{$f$}}}
\put(1.1,0){\mbox{{$M$}}}
\put(1.3,0){\mbox{{$Q$}}}
\put(0.43,0){\mbox{{$\eps$}}}
\put(2.1,1.8){\mbox{{$g$}}}
\end{picture}
\caption{Graph of the weight function $w(y)$. The weight is a linear function on $[0,\eps]$, a quadratic function on $[\eps,Q]$, and an shifted negative power of $y$ on $[Q,\infty)$.}
\label{fig:w}
\end{center}
\end{figure}

\subsubsection{Evolution of the Lyapunov functional $\GG$}
We use \eqref{eq:a:evo}--\eqref{eq:a:BC:2} and the boundary values of $w$, given by \eqref{eq:f:0} and \eqref{eq:g:infty}, to deduce
\begin{align} 
\frac{d}{dt} \GG 
&= \int_0^\infty \left( \partial_{yy} a + a^2 - \partial_y^{-1} a\,\partial_y a + L[a] + F \right) w  dy \notag \\
&= - \int_0^\infty \partial_y a w' dy + 2 \int_0^\infty a^2 w dy + \int_0^\infty a \partial_y^{-1}a\, w' dy + \int_0^\infty L[a] w dy + \int_0^\infty F w dy\notag \\
&\geq \int_0^\infty a w'' dy + 2 \int_0^\infty a^2 w dy - \frac 12 \int_0^\infty (\partial_y^{-1} a)^2  w''  dy + \int_0^\infty L[a] w dy \notag \\
&= I_1 + 2 I_2 - \frac 12 I_3 + I_4. \label{eq:IK:dt:G}
\end{align}
The above integrations by parts are justified by the fact that $a$ and $w$ are sufficiently smooth, $w$ obeys the Dirichlet boundary conditions, $a$ and $\partial_{y}^{-1} a$ vanish at $y=0$, while $\partial_y w$ vanishes as $y\to \infty$. Here we have used that by \eqref{eq:F:positive} we have $F(t,y) \geq 0$, which combined with $w(y) \geq 0$ shows that the forcing is non-negative.
We now bound each of the terms in \eqref{eq:IK:dt:G} separately.

\subsubsection{Bound for $I_1$}
To bound $I_1$ we use the convexity of $w$ on $[Q,\infty)$ (cf.~\eqref{eq:g:convex}) and the fact that \eqref{eq:IK:f:d2:lower} holds. We  deduce that 
\begin{align}
I_1 \geq \int_0^Q a f''  \geq - c_f \int_0^{Q} a f \geq -c_f \GG.
\label{eq:IK:I1}
\end{align}

\subsubsection{Bound for $I_2$}
By the Cauchy-Schwartz inequality, 
and denoting
\begin{align}
c_1= \|w\|_{L^1(\RR_+)} < \infty,
\label{eq:IK:w:L1}
\end{align}
we obtain  
\begin{align*} 
\GG = \int_0^\infty a w dy = \int_0^\infty a \sqrt{w} \sqrt{w} dy \leq \|a \sqrt{w} \|_{L^2} \|\sqrt{w} \|_{L^2}= c_1^{1/2} I_2^{1/2}
\end{align*}
and thus
\begin{align} 
I_2 \geq \frac{1}{c_1} \GG^2.\label{eq:IK:I2}
\end{align}

\subsubsection{Bound for $I_3$}
We proceed to bound the difficult term $I_3$. Let $\eta(y)$ be the cutoff function defined in \eqref{eq:eta:def}. First we have
\begin{align}
I_3 & = \int_0^\infty (\partial_y^{-1} a)^2 w''  \notag\\
& = \int_0^\infty \eta (\partial_y^{-1} a)^2 w'' + \int_0^\infty (1-\eta) (\partial_y^{-1} a)^2 f'' \notag\\
&\leq \int_0^\infty \eta (\partial_y^{-1} a)^2 g'' +  \int_M^{Q} \eta (\partial_y^{-1} a)^2 (f''- g'') + \int_M^Q (1-\eta) (\partial_y^{-1} a)^2 f''  \notag \\
&= J + \int_M^{Q}  (\partial_y^{-1} a)^2 (f'' - \eta g'')  
\label{eq:IK:I3:1}
\end{align}
where we have used \eqref{eq:IK:f:d2<0} in the second to last step to bound 
\[
\int_0^M (1-\eta) (\partial_y^{-1} a)^2 f'' \leq 0.
\]
The term $J$ above is the leading term and it shall be bounded using integration by parts, which is justified since $g'$ vanishes as $y \to \infty$. Since $\partial_{y}^{-1} a|_{y=0}$, we obtain from \eqref{eq:g:infty} and \eqref{eq:IK:g:decreasing}  that
\begin{align*}
J &=\int_0^\infty \eta (\partial_y^{-1} a)^2 g'' \notag\\
&= - 2 \int_0^\infty \eta a (\partial_y^{-1} a) g' - \int_0^\infty \partial_y\eta (\partial_y^{-1} a)^2 g' \notag\\
&= 2 \int_0^\infty \eta a (\partial_y^{-1} a) |g'| + \int_0^\infty \partial_y\eta (\partial_y^{-1} a)^2 |g'|.
\end{align*}
Further, recalling that $\eta$ is supported on $[M,\infty)$, by appealing to \eqref{eq:IK:KEY:1} and \eqref{eq:IK:KEY:2} we get
\begin{align*}
J &\leq 2 \left( \int_0^\infty \eta (\partial_y^{-1} a)^2 g'' \right)^{1/2} \left( \int_0^\infty \eta a^2 \frac{(g')^2}{g''} \right)^{1/2} + \int_0^\infty \eta' (\partial_y^{-1} a)^2 |g'| \notag\\
&\leq 2 \sqrt{\beta} \left( \int_0^\infty \eta (\partial_y^{-1} a)^2 g'' \right)^{1/2} \left( \int_0^\infty   a^2 w \right)^{1/2} + \int_0^\infty \eta' (\partial_y^{-1} a)^2 |g'| \notag\\
&= 2 \sqrt{\beta} J^{1/2} I_2^{1/2} + \int_0^\infty \eta' (\partial_y^{-1} a)^2 |g'|.
\end{align*}
Using the inequality $2xy \leq x^2/2 +2 y^2$, we further estimate
\begin{align*} 
J \leq \frac{J}{2} + 2 \beta I_2 +   \int_0^\infty \eta' (\partial_y^{-1} a)^2 |g'|
\end{align*}
which in turn yields
\begin{align} 
J \leq 4 \beta I_2 +  2 \int_0^\infty \eta' (\partial_y^{-1} a)^2 |g'|.
\label{eq:IK:I3:2}
\end{align}
Combining \eqref{eq:IK:I3:1} and \eqref{eq:IK:I3:2}, we obtain 
\begin{align} 
I_3 
\leq 4 \beta I_2 +  2 \int_M^{Q} \eta' (\partial_y^{-1} a)^2 |g'| + \int_M^{Q}  (\partial_y^{-1} a)^2 (f''- \eta g'').
\label{eq:IK:I3:3}
\end{align}
By appealing to \eqref{eq:IK:KEY:3} 
we then arrive at the bound
\begin{align} 
I_3 \leq 4 \beta I_2
\label{eq:IK:I3}
\end{align}
which is a convenient estimate when $\beta <1$.

\subsubsection{Bound for $I_4$}
Consider 
\begin{align} 
I_4  = \int_0^\infty L[a] w  = - 2 \int_0^\infty a \, \phi \, w  + \int_0^\infty \partial_y a\, \partial_y^{-1} \phi \, w  + \int_0^\infty \partial_y \phi\, \partial_y^{-1} a\, w.
\label{eq:I4:test}
\end{align}
Using that \eqref{eq:phi:positive}--\eqref{eq:phi:increasing} hold (cf.~Lemma~\ref{lem:F}), upon integrating by parts in the second term on the far right side of \eqref{eq:I4:test}, we have that 
\begin{align*} 
I_4 
&\geq - 3 \int_0^\infty a \, \phi \, w - \int_0^\infty a\, \partial_y^{-1} \phi \, w' \notag\\
&\geq - 3 C_\kappa(1+t) \GG - \int_0^Q a \, \partial_y^{-1} \phi \, (w')_+
\end{align*}
where $(w')_+ = \max\{ w',0\}$. In the last step we used that $w$ is decreasing on $[Q,\infty)$.
Since the bounds \eqref{eq:IK:f>0}, \eqref{eq:IK:f:d1}, and \eqref{eq:phi:bounded} hold on $[0,Q]$, we arrive at
\begin{align*} 
\partial_y^{-1} \phi (w')_+ \leq y \phi(Q) (w')_+ \leq C_\kappa (1+t) \bar c_f w
\end{align*}
for $y \in [0,Q]$. We thus obtain 
\begin{align} 
I_4 \geq - (3+ \bar c_f )C_\kappa (1+t) \GG 
\label{eq:IK:I4}.
\end{align}

\subsubsection{The lower bound for the growth of the Lyapunov functional}
Combining \eqref{eq:IK:dt:G}, \eqref{eq:IK:I1}, \eqref{eq:IK:I2},  \eqref{eq:IK:I3}, and \eqref{eq:IK:I4}, with the assumption that $\beta < 1$, we arrive at 
\begin{align} 
\frac{d}{dt} \GG 
&\geq -c_{f} \GG + 2 I_2 (1-\beta) - (3+ \bar c_f )C_\kappa (1+t) \GG  \notag\\ 
&\geq - (c_f + (3+\bar c_f) C_\kappa) (1+t) \GG + \frac{2 (1-\beta)}{c_1} \GG^2\notag\\
&\geq -C_{\kappa,w} (1+t) \GG + \frac{1}{C_{\kappa,w}} \GG^2
\label{eq:dt:G}
\end{align}
for some sufficiently large positive constant $C_{\kappa,w} \geq 1$, which only depends on the choice of $\kappa$ and the weight $w$. Note that $\beta<1$  is essential here. 

\subsection{Conclusion of the proof of Theorem~\ref{thm:main}}
Therefore, if we ensure that $\GG_0 = \GG|_{t=0}$ is sufficiently large, the solution $\GG(t)$ of \eqref{eq:dt:G} blows up in finite time. Quantitatively, it is sufficient to let
\begin{align} 
\GG_0 \geq 4 C_{\kappa,w}^2.
\label{eq:G:0}
\end{align}
The condition \eqref{eq:G:0} may be achieved by a smooth initial datum. For instance we may let $a_0(y) = a(t,y)|_{y=0}$ be given by a large amplitude Gaussian bump, i.e., $a_0(y) = A \varphi(y)$, where $\varphi$ is as in \eqref{eq:IC:2} above, and $A>0$ is sufficiently large. 
In view of Remark~\ref{rem:datum}, more general classes of functions $\varphi(y)$ may be considered, including those with compact support in $y$ (cf.~\cite{CannoneLombardoSammartino01,KukavicaVicol13a}).

\section{Properties of the boundary condition lift $\phi$}
\label{sec:phi}

It is easy to verify that the function $\phi$ defined in \eqref{eq:phi:particular}, i.e.,
\begin{align} 
\phi(t,y) &=   \kappa \erf\left( \frac{y}{\sqrt{4(t+1)}} \right) \notag\\
&\quad + \kappa^2 t \left( \frac{y^2}{2t} \left( \erf\left( \frac{y}{\sqrt{4 t}} \right) - 1 \right) + \left( \erf\left( \frac{y}{\sqrt{4 t}} \right) + \frac{y}{\sqrt{\pi t}} \exp\left( - \frac{y^2}{4 t}\right) \right) \right) \notag \\
&=   \kappa \erf\left( \frac{y}{\sqrt{4(t+1)}} \right) \notag\\
&\quad + \kappa^2 t \left( 2 z(t,y)^2 \left( \erf\left( z(t,y) \right) - 1 \right) + \left( \erf\left( z(t,y) \right) + \frac{2 z(t,y) }{\sqrt{\pi }} \exp\left( - z(t,y)^2\right) \right) \right)
\label{eq:phi:def:2}
\end{align}
obeys the non homogenous heat equation \eqref{eq:phi:1}--\eqref{eq:phi:3}, with initial value $\phi_0(y) = \kappa \erf(y/2)$, where $z(t,y) = y/\sqrt{4 t}$ is the heat self-similar variable.

\subsection{Proof of Lemma~\ref{lem:F}}

\begin{proof}[Proof of \eqref{eq:phi:increasing}]
First note that for $t>0$ and $y>0$, the function $\partial_y \phi$ obeys the heat equation, i.e., $(\partial_t -\partial_{yy})(\partial_y  \phi) =0$. Using the exact formula \eqref{eq:phi:particular} we obtain the initial and boundary values for the quantity $\partial_y \phi$. 
Taking the $y$ derivative of $\phi$ gives
  \begin{align*}
  \partial_y \phi 
  =
  \frac{\kappa}{\sqrt{\pi\tt}}\exp{\left(-\frac{y^2}{4\tt}\right)}
  +&
  \kappa^2\bigg( y\left(\erf \left( \frac{y}{\sqrt{4t}} \right)-1\right)
  +
  \frac{y^2}{\sqrt{4\pi t}}\exp\left( - \frac{y^2}{4 t}\right)\nonumber\\
  &
  \qquad + 
  t\left(
   \frac{2}{\sqrt{\pi t}}\exp\left( - \frac{y^2}{4 t}\right)
   - \frac{y^2}{2t\sqrt{\pi t}}\exp\left( - \frac{y^2}{4 t}\right)\right)\bigg)
  \end{align*}
  where $\tt = t+1$.
 Sending $y \to 0$ and $y \to \infty$ we obtain
\begin{align*}
\partial_y \phi|_{y=0} &= \frac{\kappa}{\sqrt{\pi\tt}} + \frac{2 \kappa^2 \sqrt{t}}{\sqrt{\pi}} >0 \\
\partial_y  \phi|_{y=\infty} &= 0
\end{align*}
for all $t>0$.
Taking the limit $t\rightarrow0$, we arrive at 
  \begin{align*}
\partial_y  \phi|_{t=0}=\frac{\kappa}{\sqrt{\pi}}\exp{\left(-\frac{y^2}{4}\right)}\ge0.
  \end{align*}
The fact  $\partial_y \phi(t,y) \geq 0$ for $t,y\geq 0$ now follows from the parabolic maximum principle. 
For the sake of completeness we repeat this classical argument. We  consider the nonnegative $C^2$ function 
  \begin{align*}
  \ff(x)=
  \begin{cases}
  x^4, &\quad x\le 0\\
  0, &\quad x\ge0.
  \end{cases}
  \end{align*}
Taking the $t$ derivative of the quantity $\int_0^{\infty}\ff(\partial_y \phi(t,y) ) dy$, upon integrating by parts in $y$ and using the boundary conditions for $\partial_y \phi$ we arrive at
  \begin{align*}
  \partial_t\int_0^{\infty}\ff(\partial_y \phi)\,dy 
  &= 
  \int_0^{\infty}\ff'(\partial_y \phi) \partial_t \partial_y \phi \,dy 
  =
  \int_0^{\infty}\ff'(\partial_y \phi)\partial^3_y\phi\,dy\notag\\
  &=
  -\int_0^{\infty}\ff''(\partial_y \phi)(\partial^2_y\phi)^2\,dy - \ff'(\partial_y \phi|_{y=0}) \partial_{yy} \phi|_{y=0}\notag\\
  &=
  -\int_0^{\infty}\ff''(\partial_y \phi)(\partial^2_y\phi)^2\,dy \le0,
  \end{align*}
from where we deduce that $\int_0^{\infty}\ff(\partial_y \phi(t,y))\,dy=0$ for $t\ge0$ since $\int_0^{\infty}\ff(\partial_y \phi_0(y))\,dy =0.$
Therefore, we  obtain  $\partial_y \phi(t,y)\ge0$, concluding the proof.
\end{proof}

\begin{proof}[Proof of \eqref{eq:phi:positive}]
Since $\phi(t, 0)=0$, for all $t>0$, the non-negativity of $\phi$ follows from the fundamental theorem of calculus and the above established monotonicity property $\partial_y \phi \geq 0$.
\end{proof}

\begin{proof}[Proof of \eqref{eq:phi:bounded}]
The proof follows from \eqref{eq:phi:def:2} since   
\begin{align*}
  2 z^2 \left( \erf\left( z  \right) - 1\right) + \erf(z) + \frac{2}{\sqrt{\pi}} z \exp(-z^2) \leq C_0
  \end{align*}
for all $z\geq 0$, for some universal constant $C_0 > 0$. We may then take $C_\kappa = \max\{ \kappa,C_0 \kappa^2 \}$.
\end{proof}

\begin{proof}[Proof of \eqref{eq:phi:concave}]
Taking the second derivative of $\phi$ defined in formula~\eqref{eq:phi:particular}, we arrive at
  \begin{align}
  \phiyy 
  =
  &-\frac{\kappa y}{\sqrt{4 \pi\tt^3}}\exp{\left(-\frac{y^2}{4\tt}\right)}
   - \kappa^2 t\bigg( \frac{2y}{\sqrt{\pi t^3}}\exp\left( - \frac{y^2}{4 t}\right) 
    -
   \frac{y^3}{4\sqrt{\pi t^5}}\exp\left( - \frac{y^2}{4 t}\right)\bigg)
    \nonumber\\
  &+ \kappa^2\bigg( \erf\frac{y}{\sqrt{4t}}-1 
  +
  \frac{2y}{\sqrt{\pi t}}\exp\left( - \frac{y^2}{4 t}\right) 
  -
  \frac{y^3}{4\sqrt{\pi t^3}}\exp\left( - \frac{y^2}{4 t}\right) \bigg).
     \label{eq:phi:yy}
  \end{align}
From this expression we get the initial and boundary values for $\phiyy$ as
  \begin{align*}
  \phiyy|_{y=0}&=-\kappa^2\le0,  \qquad \mbox{for }t>0\\
  \phiyy|_{y\rightarrow\infty}&=0,  \qquad \mbox{for }t>0,\\
  \phiyy|_{t=0}&=-\frac{\kappa y}{2\sqrt{\pi}}\exp{\left(-\frac{y^2}{4}\right)}\le0, \qquad \mbox{for }y>0.
  \end{align*}
  An argument similar to the one above shows that by the parabolic maximum principle we have $\partial_{yy} \phi (t,y)\leq 0$ for all $t,y\geq 0$.
\end{proof}

\section{Construction of a weight function $w$ for the Lyapunov functional}
\label{sec:w}
We fix 
$Q = 1$
and let $r>1$ be a free parameter, to be chosen below. In terms of this $r$ we shall pick $1/2 <M = M(r)<1$, $ 0 < \beta = \beta(r) < 1 $, and $B = B(r)>1$ so that the conditions \eqref{eq:f:0}--\eqref{eq:IK:KEY:3} hold.

Define the function $f$ by
\begin{align}
f(y) = \frac{2B+r}{B^r} y  - \frac{B+r}{B^r} y^2
\label{eq:f:def}
\end{align}
Therefore, $f(0) = 0$	 and
\begin{align*}
f'(y) = \frac{2B+r}{B^r}  - \frac{2 (B+r)}{B^r} y 
\end{align*}
with
\begin{align*}
f''(y) = - \frac{2(B+r)}{B^r}.
\end{align*}
It thus follows that $f \in W^{2,\infty}([0,1])$, $f \geq 0$, and $f'' \leq 0$ on $(0,1)$, so that \eqref{eq:f:0}--\eqref{eq:IK:f:d2<0} hold. 
Moreover, \eqref{eq:IK:f:d1} holds with $\bar c_f = 1$.
Note however that \eqref{eq:IK:f:d2:lower} does not hold in a neighborhood of the origin, since $f(0)=0$. Instead, the function $f$ defined in \eqref{eq:f:def} needs to be modified in a small neighborhood near the origin so that it is linear there (see Remark~\ref{rem:f:near:0} below).

Next, we define 
\begin{align*} 
g(y) = \frac{B}{(y+B-1)^r}
\end{align*}
set initially for all $y \geq 1$, but which is a well-defined function on $y \geq M$ as long as $M+B>1$. Note that $r>1$ implies that \eqref{eq:IK:w:L1} holds.
We have that
\begin{align*} 
g'(y) = - \frac{r B}{(y+B-1)^{r+1}} < 0 
\end{align*}
and
\begin{align*}
g''(y) =  \frac{r (r+1) B}{(y+B-1)^{r+2}} >0.
\end{align*}
Thus, the properties \eqref{eq:g:infty}--\eqref{eq:g:convex} hold for this function $g$.
Moreover,
\begin{align*} 
\frac{g'(y)^2}{g(y) g''(y)} = \frac{r}{r+1} < 1
\end{align*}
for all $y>0$.
This verifies that condition \eqref{eq:IK:KEY:1} holds for any $\beta \in [r/(r+1),1)$.

Note that at $y=Q=1$ we have
\begin{align*} 
f(1) = B^{1-r} = g(1)
\end{align*}
and
\begin{align*} 
f'(1) = - r B^{-r} = g'(1)
\end{align*}
and thus $f$ and $g$ may be glued together at $Q=1$ to yield a $W^{2,\infty}$ function on $\RR$.

In order to assure that \eqref{eq:IK:KEY:2} holds, it is sufficient to verify that
\begin{align} 
\frac{r}{r+1} \psi\left(\frac{y-M}{1-M}\right) g(y) \leq \beta f(y)
\label{eq:IK:CHECK:2}
\end{align}
holds for all $y \in [M,1]$, where $r/(r+1) < \beta < 1$ is arbitrary. The condition \eqref{eq:IK:CHECK:2} holds automatically 
with
\begin{align*} 
\beta = \frac{2r+1}{2r+2}
\end{align*}
if we ensure that 
\begin{align} 
\sup_{y \in [M,1]} \frac{g(y)}{f(y)} \leq \frac{2r+1}{2r}.
\label{eq:IK:CHECK:21}
\end{align}
In view of the continuity of the above functions, \eqref{eq:IK:CHECK:21} holds if we choose $M \in (0,1)$ sufficiently close to $1$. We need though to be more precise on this choice of $M$. Indeed, \eqref{eq:IK:CHECK:21} holds for $y \in [M,1]$ if we impose that 
\begin{align*} 
\frac{B^{r+1}}{(y+B-1)^r} &\leq \frac{2r+1}{2r} y \left( (2B+r) - (B+r) y \right)
\end{align*}
which is a consequence of
\begin{align*}
\frac{B^{r+1}}{(M+B-1)^r} &\leq \frac{2r+1}{2r} M B.
\end{align*}
Assuming that $1-M\leq 1/(4r+2)$,  the above follows from 
\begin{align*}
\frac{1}{\left(1- (1-M)/B\right)^r} \leq \frac{4r+1}{4r} 
\end{align*}
which holds provided that
\begin{align*}
 \left( \frac{4r}{4r+1}\right)^{1/r} \leq 1- \frac{1-M}{B}.
\end{align*}
The last condition may be written as 
\begin{align}
1-M \leq B  \left( 1 - \left( \frac{4r}{4r+1}\right)^{1/r} \right).
\label{eq:r:cond}
\end{align}
Therefore, \eqref{eq:r:cond} holds if we choose
\begin{align} 
1-M = \min\left\{ \frac{1}{4r+2} , B  \left( 1 - \left( \frac{4r}{4r+1}\right)^{1/r} \right) \right\} = \frac{1}{4r+2}
\label{eq:M:def}
\end{align}
as long as
\begin{align} 
B \geq \frac{1}{(4r+2)\left(1 - \left( (4r)/(4r+1)\right)^{1/r}\right)} \label{eq:B:cond:1}.
\end{align}
This ensures the validity of the  condition \eqref{eq:IK:CHECK:2}, and thus also \eqref{eq:IK:KEY:2} holds.

We finally verify that \eqref{eq:IK:KEY:3} holds, or equivalently 
\begin{align} 
\frac{2}{1-M}\psi'\left(\frac{y-M}{1-M}\right) \frac{r B}{(y+B-1)^{r+1}} \leq \psi\left(\frac{y-M}{1-M}\right) \frac{r (r+1) B}{(y+B-1)^{r+2}} + \frac{2 (B+r)}{B^r}.
\label{eq:IK:CHECK:3}
\end{align}
Since $|\psi'| \leq 2$ and $\phi \geq 0$, the above condition holds on $[M,1]$ once we ensure that 
\begin{align*} 
\frac{4}{1-M} \frac{r B}{(y+B-1)^{r+1}} \leq \frac{2 (B+r)}{B^r}
\end{align*}
which is implied by
\begin{align*}
\frac{B^{r+1}}{(M+B-1)^{r+1}} \leq \frac{(1-M)(B+r)}{2r}.
\end{align*}
The above condition holds if we take $B$ sufficiently large, depending only on $r$. More precisely, since $M$ obeys \eqref{eq:M:def}, letting $B$ obey \eqref{eq:B:cond:1} and also 
\begin{align}
B \geq 2r (4r+2) \left(\frac{4r+1}{4r}\right)^{(r+1)/r},
\label{eq:B:cond:2}
\end{align}
we complete the proof of \eqref{eq:IK:CHECK:3}.

In summary, the conditions \eqref{eq:f:0}--\eqref{eq:IK:KEY:3}, except for \eqref{eq:IK:f:d2:lower}, are obeyed once we set
\begin{align*} 
r&=2 \\
\beta &= \frac{5}{6} < 1 \\
M &= \frac{9}{10} <  Q = 1\\
B &=50.
\end{align*}

\subsection{Condition \eqref{eq:IK:f:d2:lower}}
 \label{rem:f:near:0}
In order to  ensure that \eqref{eq:IK:f:d2:lower} holds, we need to tweak the functions $f$ and $g$ defined above.
Let $0 < \eps < 1/2$ be a small parameter, to be determined.
We then have 
\begin{align*}
 f(\eps) &= \frac{2B+r}{B^r} \eps - \frac{B+r}{B^r} \eps^2 > 0\\
 f'(\eps) &= \frac{2B+r}{B^r}  - \frac{2(B+r)}{B^r} \eps >0.
\end{align*}
Therefore, we can extend $f(y)$ by the linear function 
\begin{align*}
h_\eps(y) &= f'(\eps) (y-\eps) + f(\eps) \notag\\
&= \left(\frac{2B+r}{B^r}  - \frac{2(B+r)}{B^r} \eps\right) (y-\eps) + \left( \frac{2B+r}{B^r} \eps - \frac{B+r}{B^r} \eps^2 \right)
\end{align*} 
on the interval $[- y_\eps,\eps]$, where 
\begin{align*}
y_\eps= \frac{f(\eps)}{f'(\eps)} - \eps \geq 0.
\end{align*}
Note that $h_\eps(-y_\eps) = 0$, $h_\eps(\eps) = f(\eps)$, and $h_\eps'(\eps) = f'(\eps)$. Therefore, glueing $h_\eps$ with $f$ at $y=\eps$, and then $f$ with $g$ at $y=Q=1$, yields a $W^{2,\infty}$ function 
\begin{align*}
w_\eps(y)=
\begin{cases}
h_\eps(y), &y \in [-y_\eps,\eps] \\
f(y), &y \in [\eps,1] \\
g(y), & y \geq 1
\end{cases}
\end{align*}
which obeys all the properties \eqref{eq:f:0}--\eqref{eq:IK:KEY:3}, but on the interval $[-y_\eps,\infty)$ instead of $[0,\infty)$. Here we used that $\eps$ is sufficiently small so that $\eps \leq M/4$. Also, \eqref{eq:IK:f:d2:lower} holds trivially on $[-y_\eps,\eps)$ since on this interval $h_\eps''(y) = 0 \geq - h_\eps(y)$. Then in view of the continuity of $f''$ and $f$, and the fact that $f$ only vanishes at $y=0$, on the compact $[\eps,1]$ we have that $|f''(y)|/f(y) \leq c_f$ for some constant $c_f \geq 1$. 

Therefore, to complete the construction of the weight function $w$,  let 
\begin{align*}
 w(y) = w_\eps(y + y_\eps)
\end{align*}
for a fixed $0< \eps \ll1$, where the values of $Q$ and $M$ are themselves shifted by $y_\eps$. In view of the smoothness of all parameters on $\eps$ this is possible without affecting \eqref{eq:IK:KEY:1}--\eqref{eq:IK:KEY:2}, upon slightly increasing the value of $\beta$ (so that it remains $<1$).

\section*{Acknowledgments}
IK and FW were supported in part by the
NSF grant DMS-1311943, 
while VV was supported in part by the NSF grant DMS-1514771 and by an
Alfred P.~Sloan Research Fellowship.

%%%%%%%%%%%%%%%%%%%%%%%%%%%%%%%%%%%%%%
%\bibliographystyle{alpha}
%\bibliography{VladBib}

\end{document}